\newcommand{\reals}{{\mbox{\bf R}}}
\newcommand{\argmin}{\mathop{\rm argmin}}
\newcommand{\norm}[1]{\left\lVert#1\right\rVert}
\newcommand{\mnorm}[1]{{\left\vert\kern-0.25ex\left\vert\kern-0.25ex\left\vert #1 
    \right\vert\kern-0.25ex\right\vert\kern-0.25ex\right\vert}}
\newtheorem{theorem}{Theorem}
\newtheorem{lemma}{Lemma}
\newtheorem{corollary}{Corollary}
\newtheorem{remark}{Remark}
\newtheorem{proposition}{Proposition}
\newtheorem{assumption}{Assumption}
\newcommand{\ie}{{\it i.e.}}
\title{\LARGE \bf
Bregman Parallel Direction Method of Multipliers for Distributed Optimization via Mirror Averaging
}
\author{Yue Yu, Beh\c{c}et A\c{c}\i kme\c{s}e and Mehran Mesbahi
\thanks{Accepted to IEEE Control Systems Letters (L-CSS).}%
}
\begin{document}

\maketitle
\thispagestyle{empty}
\pagestyle{empty}

\begin{abstract}

Distributed optimization aims to optimize a global objective formed by a sum of coupled local convex functions over a graph via only local computation and communication. In this paper, we propose the Bregman parallel direction method of multipliers (PDMM) based on a generalized averaging step named mirror averaging. We establish the global convergence and $O(1/T)$ convergence rate of the Bregman PDMM, along with its $O(n/\ln n)$ improvement over existing PDMM, where $T$ denotes the number of iterations and $n$ the dimension of solution variable. In addition, we can enhance its performance by optimizing the spectral gap of the averaging matrix. We demonstrate our results via a numerical example. 

\end{abstract}

\section{Introduction}
Distributed optimization arises in a variety of applications such as distributed tracking and localization \cite{li2002detection}, estimation in sensor networks \cite{lesser2012distributed}, and multiagent coordination \cite{xiao2007distributed}. In particular, given an undirected connected graph with \(m\) vertices, distributed optimization over this graph is defined as
\begin{equation}
\begin{array}{ll}
\underset{u\in\mathcal{X}}{\mbox{minimize}} & \sum_{i=1}^m f_i(u)
\end{array}
\label{distributed optimization problem}
\end{equation}
where \(\mathcal{X}\subseteq\reals^n\) is a closed convex set, each \(f_i\) is a convex function locally known by vertex \(i\) only. The optimality is achieved by local optimization on each vertex and efficient communication between  neighboring vertices in the graph.

Alternating direction method of multipliers (ADMM) \cite{boyd2011distributed} is a primal-dual algorithm that alternatively optimizes a quadratic augmented Lagrangian with respect to splitted primal variables and dual variables. There has been an increasing interest in applying multi-block variants of ADMM to solve problem \eqref{distributed optimization problem} \cite{he2012alternating, deng2017parallel, hong2017linear}. One of the main challenges of such methods is to find a separable approximation to the coupled quadratic penalty term in augmented Lagrangian. In particular, a Gauss-Seidel approximation \cite{he2012alternating, hong2017linear} was proposed in \cite{wei2012distributed}, which results in sequential updates on the vertices. On the other hand, a Jacobian approximation based variant of ADMM \cite{jakovetic2013distributed, deng2017parallel} allows simultaneous updates \cite{wang2014parallel, meng2015proximal}. We call such methods parallel direction method of multipliers (PDMM) since their primal variables are updated parallelly instead of alternatively.

Bregman ADMM \cite{wang2014bregman} is a generalization of ADMM where the quadratic penalty function in ADMM updates is replaced by Bregman divergence, which can potentially exploit the problem structure. There has been attempts to introduce Bregman divergence as proximal term in multi-block variants of ADMM \cite{wang2014parallel, wang2015convergence}, but they are still based on a quadratic augmented Lagrangian. To our best knowledge, all existing ADMM based methods for distributed optimization use quadratic penalty functions. 

In this paper, we propose a new solution method, namely Bregman PDMM, for distributed optimization, which combines the advantages of PDMM and Bregman ADMM. We first propose a generalized averaging step named mirror averaging. Based on this, we develop Bregman PDMM which replaces all the quadratic penalty function in PDMM updates with Bregman divergence. We establish the global convergence of the proposed algorithm and its \(O(1/T)\) convergence rate, where \(T\) is the number of iterations. Furthermore, in some cases, Bregman PDMM can outperform PDMM by a factor of \(O(n/\ln n)\), where \(n\) is the dimension of solution variable. Finally, we show that by optimizing the spectral gap of the averaging matrix, we can enhance the performance of the Bregman PDMM. 

The rest of the paper is organized as follows. \S\ref{preliminaries} provides a reformulation of problem \eqref{distributed optimization problem} using consensus constraints. In \S\ref{method}, we develop Bregman PDMM for problem \eqref{distributed optimization problem}, whose convergence properties are established in \S\ref{convergence} via Lyapunov analysis. \S\ref{numerical examples} presents numerical examples; \S\ref{conclusion} concludes the paper and comments on the future work.  
\section{Preliminaries and Background}\label{preliminaries}

\subsection{Notation}
Let \(\reals\) (\(\reals_+\)) denote the (nonnegative) real numbers, \(\reals^n\) (\(\reals^n_+\))  denote the set of \(n\)-dimensional (elementwise nonnegative) vectors.  Let \(\geq(\leq)\) denote elementwise inequality when applied to vectors and matrices. Let \(\langle\cdot, \cdot\rangle\) denote the dot product.  Let \(I_n\in\reals^{n\times n}\) denote the \(n\)-dimensional identity matrix, \(\mathbf{1}_n\in\reals^n\) the \(n\)-dimensional vector of all \(1\)s. Given matrix \(A\in\reals^{n\times n}\), let \(A_{ij}\) denote its \((i, j)\) entry; \(A^\top\) denotes its transpose. Let \(\otimes\) denote the Kronecker product. Given the set \(\mathcal{X}\subseteq\reals^n\),  its indicator function \(\iota_\mathcal{X}:\reals^n\to \reals\) is defined as: \(\iota_\mathcal{X}(u)=0\) if \(u\in\mathcal{X}\) and \(+\infty\) otherwise.
\subsection{Subgradients}
Let \(f:\reals^n\to\reals\) be a convex function. Then \(g\in\reals^n\) is a subgradient of \(f\) at \(u\in\reals^n\) if and only if for any \(v\in\reals^n\) one has
\begin{equation}
f(v)-f(u)\geq \left\langle g, v-u\right\rangle.
\label{definition of subgradient}
\end{equation}
We denote \(\partial f(u)\) the set of subgradients of \(f\) at \(u\). We will use the following results.
\begin{lemma}\cite[Theorem 27.4]{rockafellar2015convex} Given a closed convex set \(\mathcal{C}\subseteq\reals^n\) and closed, convex, proper function \(f:\reals^n\to \reals\), then \(u^\star=\underset{u\in\mathcal{C}}{\argmin}\, f(u)\) if and only if there exists \(g\in N_\mathcal{C}(u^\star)\) such that \(-g\in\partial f(u^\star)\), where 
\begin{equation}
N_\mathcal{C}(u^\star)\coloneqq\left\{g\in\reals^n: \langle g, u^\star-v\rangle\geq 0, \forall v\in\mathcal{C}\right\}
\label{definition of normal cone}
\end{equation}
\label{lemma optimality condition} 
is the normal cone of the set \(\mathcal{C}\) at \(u^\star\).
\end{lemma}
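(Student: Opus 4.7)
The plan is to reduce the constrained-minimization characterization to an unconstrained subdifferential inclusion by absorbing the feasible set into the objective through its indicator function. First I would note that $u^\star = \argmin_{u\in\mathcal{C}} f(u)$ is equivalent to $u^\star$ being an unconstrained minimizer of $f + \iota_\mathcal{C}$ over $\reals^n$, and this in turn is equivalent, by the standard zero-subgradient characterization of convex minimizers (a direct consequence of the subgradient inequality \eqref{definition of subgradient} applied at $u^\star$ with $g=0$), to $0 \in \partial(f+\iota_\mathcal{C})(u^\star)$.

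Next I would invoke the Moreau--Rockafellar sum rule to split the subdifferential as $\partial(f+\iota_\mathcal{C})(u^\star) = \partial f(u^\star) + \partial\iota_\mathcal{C}(u^\star)$, and then verify the identification $\partial\iota_\mathcal{C}(u^\star) = N_\mathcal{C}(u^\star)$: for $u^\star\in\mathcal{C}$, a vector $h$ lies in $\partial\iota_\mathcal{C}(u^\star)$ iff $\iota_\mathcal{C}(v) \geq \langle h, v-u^\star\rangle$ for every $v\in\reals^n$, which is vacuous when $v\notin\mathcal{C}$ and reduces to $\langle h, u^\star-v\rangle\geq 0$ for $v\in\mathcal{C}$---exactly the condition in \eqref{definition of normal cone}. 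Combining these two steps rewrites $0\in\partial(f+\iota_\mathcal{C})(u^\star)$ as the existence of some $g\in N_\mathcal{C}(u^\star)$ with $-g\in\partial f(u^\star)$, and the reverse implication is obtained by running the same chain of equivalences backwards.

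The main obstacle in this skeleton is the validity of the sum rule, which in general requires a constraint qualification such as $\mathrm{ri}(\mathrm{dom}\, f)\cap \mathrm{ri}(\mathcal{C})\neq\emptyset$. However, because the hypothesis states $f:\reals^n\to\reals$ is finite-valued everywhere, $\mathrm{dom}\, f=\reals^n$ and this qualification is satisfied automatically for any nonempty closed convex $\mathcal{C}$. Thus the sum rule applies unconditionally in this setting, so all the genuine work is the two identifications above---which is presumably why Rockafellar packages the full statement as a single named theorem that we can cite directly.
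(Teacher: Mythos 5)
Your argument is correct: the paper gives no proof of this lemma (it is cited directly from Rockafellar), and your derivation via \(0\in\partial(f+\iota_\mathcal{C})(u^\star)\), the Moreau--Rockafellar sum rule, and the identification \(\partial\iota_\mathcal{C}(u^\star)=N_\mathcal{C}(u^\star)\) is exactly the standard route the cited theorem formalizes. You also correctly observe that the finite-valuedness of \(f\) on all of \(\reals^n\) makes the constraint qualification for the sum rule automatic, which is the only point where the argument could otherwise fail.
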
 
\subsection{Mirror maps and Bregman divergence} 
Let \(\mathcal{D}\subseteq\reals^n\) be a convex open set. We say that \(\phi:\mathcal{D}\to\reals\) is a {\em mirror map} \cite[p.298]{bubeck2015convex} if it satisfies: 1) \(\phi\) is differentiable and strictly convex, 2) \(\nabla\phi\) takes all possible values, and 3) \(\nabla\phi\) diverges on the boundary of the closure of \(\mathcal{D}\), \ie , \(\lim_{u\to\partial \bar{\mathcal{D}}}\norm{\nabla\phi(u)}=\infty\), where \(\norm{\cdot}\) is an arbitrary norm on \(\reals^n\). Bregman divergence \(B_\phi:\mathcal{D}\times\mathcal{D}\to\reals_+\) induced by \(\phi\) is defined as
\begin{equation}
B_\phi(u, v)=\phi(u)-\phi(v)-\left\langle \nabla\phi(v), u-v\right\rangle.
\label{definition of Bregman divergence}
\end{equation}
Note that \(B_\phi(u, v)\geq 0\) and \(B_\phi(u, v)=0\) only if \(u=v\). $\Phi$ and $B_\phi$ also satisfy the following three-point identity,
\begin{equation}
\begin{aligned}
&\langle\nabla\phi(u)-\nabla\phi(v), w-u\rangle\\
=&B_\phi(w, v)-B_\phi(w, u)-B_\phi(u, v).
\end{aligned}\label{3-point property}
\end{equation}  

\subsection{Graphs and distibuted optimization}
An undirected connected graph \(\mathcal{G}=(\mathcal{V}, \mathcal{E})\) contains a vertex set \(\mathcal{V}=\{1, 2, \ldots, m\}\) and an edge set \(\mathcal{E}\subseteq \mathcal{V}\times \mathcal{V}\) such that \((i, j)\in\mathcal{E}\) if and only if \((j, i)\in\mathcal{E}\) for all \(i, j\in\mathcal{V}\). Denote \(\mathcal{N}(i)\) the set of neighbors of node \(i\), where \(j\in\mathcal{N}(i)\) if \((i, j)\in\mathcal{E}\). 

Consider a symmetric stochastic matrix \(P\in\reals^{m\times m}\) defined on the graph \(\mathcal{G}\) such that \(P_{ij}>0\) implies that \(j\in\mathcal{N}(i)\). Such a matrix \(P\) can be constructed, for example, by the graph Laplacian \cite[Proposition 3.18]{mesbahi2010graph}. The eigenvalues of \(P\) are real and will be ordered nonincreasingly in their magnitude, denoted by \(|\lambda_1(P)|\geq |\lambda_2(P)|\geq \ldots\geq|\lambda_m(P)|\). From \cite[Theorem 8.4.4]{horn2012matrix} we know that \(\lambda_1(P)=1\) is simple with eigenvectors spanned by \(\mathbf{1}_m\).


Let \(\mathcal{G}=(\mathcal{V}, \mathcal{E})\) denote the underlying graph over which the distributed optimization problem \eqref{distributed optimization problem} is defined. A common approach  to solve problem \eqref{distributed optimization problem} is to create local copies of the design variable \(\{x_1, x_2, \ldots, x_m\}\) and impose the consensus constraints: \(x_i=x_j\) for all \((i, j)\in \mathcal{E}\) \cite{bertsekas1989parallel, boyd2011distributed}. Many different forms of consensus constraints have been proposed \cite{wei2012distributed, jakovetic2013distributed, iutzeler2013asynchronous, shi2014linear}. In this paper, we consider consensus constraints of the form:
\begin{equation}
(P\otimes I_n)\bm{x}=\bm{x},
\label{consensus constraints}
\end{equation}
where \(\bm{x} =[x^\top_1, x^\top_2, \ldots, x^\top_m]^\top\), \(P\) is a symmetric, stochastic and irreducible matrix defined on \(\mathcal{G}\). We will focus on solving the following reformulation of \eqref{distributed optimization problem},
\begin{equation}
\begin{array}{ll}
\underset{\bm{x}\in\mathcal{X}^m}{\mbox{minimize}} & \sum_{i\in\mathcal{V}} f_i(x_i)\\
\mbox{subject to} & (P\otimes I_n) \bm{x}= \bm{x},
\end{array}
\label{consensus optimization problem}
\end{equation}
where \(\mathcal{X}^m\) is the Cartesian product of \(m\) copies of \(\mathcal{X}\).
\section{Bregman Parallel Direction Method of Multipliers}\label{method}
In this section, we first introduce an existing PDMM that contains averaging as an implicit update. Then we generalize averaging to mirror averaging based on the idea of mirror descent, and finally propose our Bregman PDMM based on mirror averaging.

\subsection{Parallel Direction Method of Multipliers} 
PDMM \cite{meng2015proximal} solves \eqref{distributed optimization problem} with \(\mathcal{X}=\reals^n\) with parallel and single loop primal updates, and links convergence behavior to graph topology \cite{makhdoumi2017convergence, francca2017distributed}.  An adaption of PDMM to formulation \eqref{consensus optimization problem} is given in Algorithm~\ref{PDMM}.

Naturally, one will try to generalize the quadratic penalty in Algorithm~\ref{PDMM} to Bregman divergence the same way  Bregman ADMM generalizes ADMM \cite{wang2014bregman}. However, if we simply replace the quadratic penalty in Algorithm~\ref{PDMM} with Bregman divergence induced by a strongly convex function \(\phi\), it is challenging  to prove its convergence for the following reasons. A crucial step in the proof provided in \cite{meng2015proximal} is to apply the three point identity \eqref{3-point property} to a convex function \(\Psi:\reals^{mn}\to\reals\) that satisfies the following differential equation,
\begin{equation}
\nabla \Psi(\bm{u})=(P\otimes I_n)\nabla\Phi(\bm{u}),
\label{Dennis ODE}
\end{equation}
where \(\Phi(\bm{u})=\sum_{i\in\mathcal{V}}\phi(u_i)\) with \(\bm{u} = [u_1^\top, \ldots, u_m^\top]^\top\in\mathcal{X}^m\). However, it is highly non-trivial to solve \eqref{Dennis ODE} for a convex function \(\Psi\) unless \(\phi\) is quadratic function. Hence we cannot directly utilize the convergence proof in \cite{meng2015proximal}. Therefore, we need to take a closer look at the role of the quadratic term in \eqref{PDMM: primal update}.

\begin{algorithm}
\caption{Existing PDMM \cite{meng2015proximal}}
\begin{algorithmic}[h]
\REQUIRE Parameter \(\rho>0\); initial point \(\bm{x}^{(0)}, \bm{\nu}^{(0)}\in\reals^{mn}\).
\FORALL{\(t=0, 1, 2, \ldots\)}
\STATE{each vertex \(i\) updates \(x_i\) in parallel
\begin{equation}
\begin{aligned}
x_{i}^{(t+1)} =  & \underset{x_i}{\argmin}  \,\,  f_i(x_i)\\
& +\langle x_i, \nu_i^{(t)} - \sum_{j\in\mathcal{N}(i)}P_{ij}\nu^{(t)}_j\rangle\\
& + \frac{\rho}{2} \sum_{j\in\mathcal{N}(i)} P_{ij}\norm{x_i - x_j^{(t)}}_2^2
\end{aligned}\label{PDMM: primal update}
\end{equation}
each vertex \(i\) updates \(\nu_i\)
\begin{equation}
\nu_i^{(t+1)} =  \nu_i^{(t)}+\rho x_i^{(t+1)}-\rho\sum_{j\in\mathcal{N}(i)}P_{ij}x_j^{(t+1)}  
\label{PDMM: dual update}
\end{equation}}
\ENDFOR
\end{algorithmic}
\label{PDMM}
\end{algorithm}

Consider the following intermediate update,
\begin{equation}
y_i^{(t)}\coloneqq\underset{x_i}{\argmin}\sum_{j\in\mathcal{N}(i)} P_{ij}\norm{x_i-x_j^{(t)}}_2^2.
\label{averaging}
\end{equation}
The behavior of \eqref{averaging} is characterized by the Markov chain defined by matrix \(P\) \cite[Proposition 3.21]{mesbahi2010graph}. In the sequel, we will generalize the quadratic function in \eqref{averaging} to Bregman divergence; then we will introduce Bregman PDMM based on such a generalization.

\subsection{Mirror Averaging}
Consider the following update: for all \(i\in\mathcal{V}\),
\begin{equation}
y_i^{(t)} = \underset{x_i\in \mathcal{X}}{\argmin}\sum_{j\in\mathcal{N}(i)} P_{ij} B_\phi(x_i, x_j^{(t)}),
\label{mirror averaging dynamics}
\end{equation}
where \(P\) is symmetric, stochastic and irreducible matrix defined on \(\mathcal{G}\), and \(\phi\) is a mirror map defined on the open set \(\mathcal{D}\) such that \(\mathcal{X}\) is included in the closure of \(\mathcal{D}\). 

Let \(\Phi(\bm{u})=\sum_{i\in\mathcal{V}} \phi(u_i)\) with \(\bm{u}=[u_1^\top, \ldots, u_n^\top]^\top\). Using an argument similar to the one in \cite[p. 301]{bubeck2015convex}, one can obtain the following result.
\begin{proposition}
Update \eqref{mirror averaging dynamics} is equivalent to
\begin{subequations}
\begin{align}
\nabla\Phi (\bm{z}^{(t)}) &= (P\otimes I_n)\nabla \Phi(\bm{x}^{(t)}), \label{averaging in mirror}\\
\bm{y}^{(t)}  &=  \underset{\bm{x}\in \mathcal{X}^m}{\argmin}\,\, B_\Phi(\bm{x}, \bm{z}^{(t)}).\label{projection in mirror}
\end{align}
\label{mirror dynamics break down}
\end{subequations}
\label{proposition 1}
\end{proposition}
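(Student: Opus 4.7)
The plan is to directly unfold the objective in \eqref{mirror averaging dynamics} componentwise and match it to a Bregman projection from a suitably defined auxiliary point. Concretely, I would start by substituting the definition \eqref{definition of Bregman divergence} of $B_\phi$ into each local problem and using the row-stochasticity of $P$ (that is, $\sum_{j\in\mathcal{N}(i)} P_{ij}=1$) to collapse the $\phi(x_i)$ terms and discard the summands independent of $x_i$. What remains is
\[
y_i^{(t)} = \underset{x_i\in\mathcal{X}}{\argmin}\;\phi(x_i) - \bigl\langle \textstyle\sum_{j\in\mathcal{N}(i)} P_{ij}\nabla\phi(x_j^{(t)}),\, x_i\bigr\rangle,
\]
so that the $i$-th minimization depends on $\{x_j^{(t)}\}_{j\in\mathcal{V}}$ only through the averaged mirror image $\sum_{j} P_{ij}\nabla\phi(x_j^{(t)})$.

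Next, I would invoke the mirror-map property that $\nabla\phi$ takes all values in $\reals^n$, which lets me define $z_i^{(t)}\in\mathcal{D}$ via $\nabla\phi(z_i^{(t)})=\sum_{j\in\mathcal{N}(i)} P_{ij}\nabla\phi(x_j^{(t)})$; stacking these definitions over $i\in\mathcal{V}$ yields exactly \eqref{averaging in mirror}. Adding to the objective above the constants $-\phi(z_i^{(t)})+\langle \nabla\phi(z_i^{(t)}), z_i^{(t)}\rangle$, which do not affect the argmin, turns the integrand into $B_\phi(x_i,z_i^{(t)})$, so $y_i^{(t)}=\argmin_{x_i\in\mathcal{X}} B_\phi(x_i,z_i^{(t)})$. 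Since the resulting local problems decouple across $i$, assembling them over $\mathcal{V}$ with $\Phi(\bm{u})=\sum_i\phi(u_i)$ gives \eqref{projection in mirror}, establishing one direction of the equivalence; the converse follows by reading the same algebra in reverse.

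The only delicate point, and the closest thing to an obstacle, is ensuring that $z_i^{(t)}$ genuinely lies in $\mathcal{D}$ so that $\phi(z_i^{(t)})$ and $\nabla\phi(z_i^{(t)})$ are meaningful; this is precisely what the defining mirror-map property $\nabla\phi(\mathcal{D})=\reals^n$ guarantees, mirroring the ``dual-space averaging'' viewpoint used for mirror descent in \cite[p.~301]{bubeck2015convex}. I expect no further technical difficulty: strict convexity of $\phi$ makes the local objectives strictly convex, so the argmin in \eqref{mirror averaging dynamics}--\eqref{projection in mirror} is unique whenever it exists, and the equivalence is pointwise in $t$.
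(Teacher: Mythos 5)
Your proposal is correct and follows essentially the same route the paper intends: the paper gives no explicit proof but defers to the dual-space-averaging argument of \cite[p.~301]{bubeck2015convex}, which is exactly the computation you carry out (expand $B_\phi$, use $\sum_j P_{ij}=1$ to isolate $\phi(x_i)-\langle\sum_j P_{ij}\nabla\phi(x_j^{(t)}),x_i\rangle$, define $z_i^{(t)}$ by surjectivity of $\nabla\phi$, and recognize the objective as $B_\phi(x_i,z_i^{(t)})$ up to constants). Your attention to $z_i^{(t)}\in\mathcal{D}$ and to uniqueness via strict convexity is a welcome addition rather than a deviation.
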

Since \eqref{averaging in mirror} has the same dynamics as averaging step \eqref{averaging}, inspired by the idea of mirror descent, we interpret \eqref{mirror averaging dynamics} as \emph{mirror averaging}: to achieve update \eqref{mirror averaging dynamics}, we first map \(\bm{x}^{(t)}\) to \(\nabla\Phi(\bm{x}^{(t)})\), next run an averaging step via \eqref{averaging in mirror} and obtain \(\nabla\Phi(\bm{z}^{(t)})\), then apply \(\left(\nabla\Phi\right)^{-1}\) to it and obtain \(\bm{z}^{(t)}\), and finally get \(\bm{y}^{(t)}\) via the projection \eqref{projection in mirror}.   

\begin{remark} We provide two special cases \cite[p~301]{bubeck2015convex} where \eqref{mirror averaging dynamics} has a close form solution:1) If \(\mathcal{X}=\reals^n\) and \(\phi=\norm{\cdot}_2^2\), then \eqref{averaging in mirror} and \eqref{projection in mirror} reduces to \eqref{averaging}. 2) If \(\mathcal{X}\) denotes the probability simplex and \(\phi\) the negative entropy function, then \eqref{averaging in mirror} reduces to weighted geometric averaging and \eqref{projection in mirror} to a simple re-normalization. 
\end{remark}

\begin{algorithm}
\caption{Bregman PDMM}
\begin{algorithmic}[h]
\REQUIRE Parameters: \(\tau, \rho >0\), \(\delta_1, \ldots, \delta_m\geq 0\); initial point \(\bm{x}^{(0)}\in\mathcal{X}^m\cap\mathcal{D}^m , \bm{\nu}^{(0)}\in\reals^{mn}\).
\FORALL{\(t=0, 1, 2, \ldots\)}
\STATE{each vertex \(i\) updates \(y_i\) and \(x_i\) in parallel
\begin{subequations}
\begin{align}
&y_i^{(t)}  =  \underset{y_i\in \mathcal{X}}{\argmin}\sum_{j\in\mathcal{N}(i)} P_{ij} B_\phi(y_i, x_j^{(t)})\label{Bregman PDMM: mirror Markov update}\\
&\begin{aligned}
x_{i}^{(t+1)} = \,&  \underset{x_i\in\mathcal{X}}{\argmin}  \,\,  f_i(x_i)\\
& +\langle x_i, \nu_i^{(t)} - \sum_{j\in\mathcal{N}(i)}P_{ij}\nu^{(t)}_j\rangle \\
& + \rho B_\phi(x_i, y_i^{(t)})+\delta_iB_{\varphi_i}(x_i, x_i^{(t)})\label{Bregman PDMM: primal update}
\end{aligned}
\end{align}
\label{Bregman PDMM: subproblem}
\end{subequations}
each vertex \(i\) updates \(\nu_i\) in parallel
\begin{equation}
\nu_i^{(t+1)} =  \nu_i^{(t)}+\tau x_i^{(t+1)}-\tau\sum_{j\in\mathcal{N}(i)}P_{ij}x_j^{(t+1)} \label{Bregman PDMM: dual update} 
\end{equation}}
\ENDFOR
\end{algorithmic}
\label{Bregman PDMM}
\end{algorithm}

We introduce the following useful lemma, whose proof can be found in the Appendix.
\begin{lemma}
Given update \eqref{mirror averaging dynamics}, for any \(u\in\mathcal{X}\), 
\begin{equation}
\begin{aligned}
&\sum_{i\in\mathcal{V}} B_\phi(u, x_i^{(t)})-\sum_{i\in\mathcal{V}} B_\phi(u, y_i^{(t)})\\
\geq&\sum_{i,j\in\mathcal{V}}P_{ij}B_\phi(y_i^{(t)}, x_j^{(t)}).
\end{aligned}
\label{lemma Pythagorean: eqn1}
\end{equation}\label{lemma Pythagorean}
\end{lemma}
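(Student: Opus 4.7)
The plan is to derive the inequality from the first-order optimality condition for the subproblem \eqref{mirror averaging dynamics} defining $y_i^{(t)}$, combined with the three-point identity \eqref{3-point property}, and then to symmetrize the result using the fact that $P$ is symmetric and stochastic.

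First, I would apply Lemma~\ref{lemma optimality condition} to \eqref{mirror averaging dynamics}. The objective $x_i\mapsto\sum_{j\in\mathcal{N}(i)}P_{ij}B_\phi(x_i,x_j^{(t)})$ has gradient $\sum_{j}P_{ij}\bigl(\nabla\phi(x_i)-\nabla\phi(x_j^{(t)})\bigr)$, which at $x_i=y_i^{(t)}$ simplifies to $\nabla\phi(y_i^{(t)})-\sum_{j}P_{ij}\nabla\phi(x_j^{(t)})$ because $\sum_j P_{ij}=1$. The optimality condition therefore yields some $g_i\in N_\mathcal{X}(y_i^{(t)})$ such that
\begin{equation*}
\nabla\phi(y_i^{(t)})-\sum_{j\in\mathcal{V}}P_{ij}\nabla\phi(x_j^{(t)})=-g_i,
\end{equation*}
and in particular $\langle g_i,\,y_i^{(t)}-u\rangle\geq 0$ for every $u\in\mathcal{X}$.

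Next, I would instantiate the three-point identity \eqref{3-point property} with its $u$, $v$, $w$ taken to be $y_i^{(t)}$, $x_j^{(t)}$, and the fixed $u\in\mathcal{X}$, respectively, giving
\begin{equation*}
\langle\nabla\phi(y_i^{(t)})-\nabla\phi(x_j^{(t)}),\,u-y_i^{(t)}\rangle=B_\phi(u,x_j^{(t)})-B_\phi(u,y_i^{(t)})-B_\phi(y_i^{(t)},x_j^{(t)}).
\end{equation*}
Multiplying by $P_{ij}$ and summing over $j\in\mathcal{V}$, the left-hand side collapses (using $\sum_j P_{ij}=1$ and the optimality condition above) to $\langle-g_i,\,u-y_i^{(t)}\rangle=\langle g_i,\,y_i^{(t)}-u\rangle\geq 0$. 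This produces, for each vertex $i$,
\begin{equation*}
\sum_{j\in\mathcal{V}}P_{ij}B_\phi(u,x_j^{(t)})-B_\phi(u,y_i^{(t)})-\sum_{j\in\mathcal{V}}P_{ij}B_\phi(y_i^{(t)},x_j^{(t)})\geq 0.
\end{equation*}

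Finally, summing over $i\in\mathcal{V}$ and invoking the symmetry and stochasticity of $P$, which give $\sum_{i}P_{ij}=1$ for every $j$, the double sum $\sum_{i,j}P_{ij}B_\phi(u,x_j^{(t)})$ telescopes to $\sum_{j}B_\phi(u,x_j^{(t)})$, and rearranging yields \eqref{lemma Pythagorean: eqn1}. The only delicate point is lining up the arguments in the three-point identity so that, after substitution of the optimality condition, the normal-cone inequality carries the correct sign; once the orientation $(y_i^{(t)},x_j^{(t)},u)$ is fixed, the remaining manipulations are routine and only require the row- and column-sum properties of $P$.
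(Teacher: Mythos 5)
Your proof is correct and follows essentially the same route as the paper's: write the first-order optimality condition for \eqref{mirror averaging dynamics}, apply the three-point identity \eqref{3-point property} with the orientation $(y_i^{(t)}, x_j^{(t)}, u)$, and sum over $i$ using the double stochasticity of $P$. The only difference is that you spell out the normal-cone element and the collapsing of the weighted sum explicitly, which the paper leaves implicit.
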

\begin{remark}
Lemma~\ref{lemma Pythagorean} turns out to be a key step in our convergence proof. Notice that without the generalization from \eqref{averaging} to \eqref{mirror averaging dynamics}, we can replace Lemma~\ref{lemma Pythagorean} with Jensen's inequality for strongly convex function by assuming the Bregman divergence is strongly convex in the second argument, but such an assumption does not hold in general. Hence the generalization from \eqref{averaging} to \eqref{mirror averaging dynamics} is necessary.
\end{remark}

\subsection{Bregman PDMM via Mirror Averaging}
Based on the above observations, we propose Algorithm~\ref{Bregman PDMM} by generalizing the quadratic penalty term in Algorithm~\ref{PDMM} to Bregman divergence. It essentially combines the parallel updates in Algorithm~\ref{PDMM} and Bregman penalty term in Bregman ADMM \cite{wang2014bregman}. Notice that Algorithm~\ref{PDMM} is a special case of Algorithm~\ref{Bregman PDMM} with \(\phi=\frac{1}{2}\norm{\cdot}_2^2\), \(\tau=\rho\), and \(\delta_i=0\) for all \(i\in\mathcal{V}\). 

\section{Convergence}\label{convergence}
In this section, we establish the convergence analysis of Algorithm~\ref{Bregman PDMM}. All detailed proofs in this section can be found in the Appendix. We first define the Lagrangian of problem \eqref{consensus optimization problem} as \(L(\bm{x}, \bm{\nu})=\sum_{i\in\mathcal{V}} L_i(x_i, \bm{\nu})\) where, 
\begin{equation}
L_i(x_i, \bm{\nu})\coloneqq  f_i(x_i) + \iota_{\mathcal{X}}(x_i)+\langle x_i, \nu_i-\sum_{j\in\mathcal{V}}P_{ij}\nu_j\rangle,
\label{Lagrangian}
\end{equation}
and \(\bm{\nu}=[\nu_1^\top, \ldots, \nu_m^\top]^\top\) denote the dual variables. 

We group our assumptions in Assumption~\ref{basic assumption}.
\begin{assumption}
\label{basic assumption}
\begin{enumerate}[(a)]
\item For all \(i\in\mathcal{V}\), \(f_i:\reals^n\to\reals\cup \{+\infty\}\) is closed, proper and convex.
\item There exists a saddle point \((\bm{x}^\star, \bm{\nu}^\star)\) that satisfies the KKT conditions of the Lagrangian given in \eqref{Lagrangian}: for all \(i\in\mathcal{V}\), there exists \(g_i\in N_\mathcal{X}(x_i^\star)\) such that
\begin{subequations}
    \begin{align}
      \sum_{j\in\mathcal{V}}P_{ij}x_j^\star & =x_i^\star \label{KKT: primal} \\
      -\nu^\star_i+\sum_{j\in\mathcal{V}} P_{ij}\nu^\star_j -g_i& \in \partial f_i(x_i^\star) \label{KKT: dual}
    \end{align} \label{KKT}
\end{subequations}
\item Functions \(\varphi_1, \varphi_2, \ldots, \varphi_n:\mathcal{D}\to\reals\) are strictly convex, where \(\mathcal{D}\) is a open convex set such that \(\mathcal{X}\) is included in its closure. Function \(\phi:\mathcal{D}\to\reals\) is a mirror map and is \(\mu\)-strongly convex with respect to \(l_p\)-norm \(\norm{\cdot}_p\) over \(\mathcal{X}\cap\mathcal{D}\), \ie, for any \(u, v\in \mathcal{X}\cap\mathcal{D}\), 
\begin{equation}
B_\phi(u, v)\geq \frac{\mu}{2}\norm{u-v}_p^2.
\label{phi strong convexity}
\end{equation} 
\item The matrix \(P\) is symmetric, stochastic, irreducible and positive semi-definite. \label{P matrix assumption}
\end{enumerate}
\end{assumption}
\begin{remark}
An immediate implication of assumptions in entry~(\ref{P matrix assumption}) is that \(\lambda_2(P)<1\) due to  \cite[Corollary 8.4.6]{horn2012matrix}.    
\end{remark}

Notice that we assume a homogeneous mirror map \(\phi\) is used by all vertices in Algorithm~\ref{Bregman PDMM}, but our results can be generalized to the cases of heterogeneous mirror maps as long as they all satisfy \eqref{phi strong convexity}. 


Now we start to construct the convergence proof of Algorithm~\ref{Bregman PDMM} under Assumption~\ref{basic assumption}. From the definition in \eqref{Lagrangian} we know that the Lagrangian \(L(\bm{x}, \bm{\nu})\)  is separable in each \(x_i\); hence the KKT conditions \eqref{KKT: dual} can be obtained separately for each \(x_i\) using Lemma~\ref{lemma optimality condition}. 
Similarly one can have the optimality condition of \eqref{Bregman PDMM: primal update}: there exists \(g_i\in N_\mathcal{X}(x_i^{(t+1)})\) such that
\begin{equation}
\begin{aligned}
&-\nu_i^{(t)}+\sum_{j\in\mathcal{V}} P_{ij}\nu_j^{(t)}-\rho\left(\nabla\phi(x_i^{(t+1)})-\nabla\phi(y_i^{(t)})\right)\\
&-\delta_i\left(\nabla\varphi_i(x_i^{(t+1)})-\nabla\varphi_i(x_i^{(t)})\right)-g_i\in \partial f_i(x_i^{(t+1)}).
\end{aligned}
\label{Bregman PDMM primal update: optimality}
\end{equation}
Our goal is to show that as \(t\to\infty\), \(\{\bm{x}^{(t+1)}, \bm{\nu}^{(t+1)}\}\) will satisfy \eqref{KKT: primal} and reduce conditions in \eqref{Bregman PDMM primal update: optimality} to those in \eqref{KKT: dual}. Note that if \(\bm{x}^{(t+1)}=(P\otimes I_n)\bm{x}^{(t+1)}\), then \(\bm{\nu}^{(t+1)}=\bm{\nu}^{(t)}\). Therefore, KKT conditions \eqref{KKT} are satisfied by \(\{\bm{x}^{(t+1)}, \bm{\nu}^{(t+1)}\}\) if the following holds
\begin{equation}
\bm{x}^{(t+1)}=(P\otimes I_n)\bm{x}^{(t+1)}, \quad \bm{x}^{(t+1)}=\bm{x}^{(t)}=\bm{y}^{(t)}. 
\label{optimality conditions}
\end{equation} 
Define the residuals of optimality conditions \eqref{optimality conditions} at iteration \(t\) as 
\begin{equation}
\begin{aligned}
&R(t+1)\coloneqq \frac{\gamma}{2}\norm{((I_m-P)\otimes I_n)\bm{x}^{(t+1)}}_2^2\\
&+\sum_{i\in\mathcal{V}} B_\phi(x_i^{(t+1)}, y_i^{(t)})+\sum_{i\in\mathcal{V}} \frac{\delta_i}{\rho} B_{\varphi_i}(x_i^{(t+1)}, x_i^{(t)}),
\end{aligned}
\label{residuals}
\end{equation}
where \(\gamma>0\). Notice that \(R(t+1)=0\) if and only if \eqref{optimality conditions} holds. Hence \(R(t+1)\) is a running distance to KKT conditions in \eqref{KKT}.
Define the Lyapunov function of Algorithm~\ref{Bregman PDMM}, which measures a running distance to optimal primal-dual pair \((\bm{x}^\star, \bm{\nu}^\star)\) as,
\begin{equation}
\begin{aligned}
V(t)\coloneqq &\frac{1}{2\tau\rho}\norm{\bm{\nu}^\star-\bm{\nu}^{(t)}}_2^2 +\sum_{i\in\mathcal{V}}B_\phi(x_i^\star, y_i^{(t)})\\
&+\sum_{i\in\mathcal{V}} \frac{\delta_i}{\rho} B_{\varphi_i}(x_i^\star, x_i^{(t)}).
\end{aligned}
\label{Lyapunov function}
\end{equation}
 
We first establish the global convergence of Algorithm~\ref{Bregman PDMM} by showing that as \(t\to\infty\), \(V(t)\) is monotonically non-increasing and that \(R(t+1)\to 0\) (see \cite{yu2018bregman} for detailed proof).

\begin{theorem}
Suppose that Assumption~\ref{basic assumption} holds. Let the sequence \(\{\bm{y}^{(t)}, \bm{x}^{(t)}, \bm{\nu}^{(t)}\}\) be generated by Algorithm~\ref{Bregman PDMM}. Let \(R(k+1)\) and \(V(k)\) be defined as in \eqref{residuals} and \eqref{Lyapunov function}, respectively. Set
\begin{equation}
\tau\leq \rho(\mu\sigma-\gamma), \quad 0<\gamma<\mu\sigma,
\label{parameter Bregman PDMM}
\end{equation}
where \(\sigma=\min\{1, n^{\frac{2}{p}-1}\}\). Then
\begin{equation}
V(t)-V(t+1)\geq R(t+1),
\label{telescop series}
\end{equation}
As \(t\to\infty\), \(R(t+1)\) converges to zero, and \(\{\bm{x}^{(t)}, \bm{\nu}^{(t)}\}\) converges to a point that satisfy KKT conditions \eqref{KKT}. 
\label{theorem Bregman PDMM global convergence}
\end{theorem}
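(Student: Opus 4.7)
The heart of the theorem is the Lyapunov descent inequality \eqref{telescop series}; once that is in hand, summing from $t=0$ to $\infty$ gives $\sum_{t\geq 0} R(t+1)\leq V(0)<\infty$, hence $R(t+1)\to 0$, and the KKT convergence is then extracted by a standard boundedness/continuity argument on any cluster point of $\{\bm{x}^{(t)},\bm{\nu}^{(t)}\}$. So my plan focuses on establishing \eqref{telescop series}.

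The plan is to combine subgradient inequalities from the primal update and the KKT saddle point. Applying \eqref{definition of subgradient} to the inclusion \eqref{Bregman PDMM primal update: optimality} at the point $x_i^\star$, and applying it to the KKT inclusion \eqref{KKT: dual} at $x_i^{(t+1)}$, adding the two and summing over $i$, the $f_i$ values cancel. What remains is a dot product between $x_i^{(t+1)}-x_i^\star$ and a combination of the dual difference $\nu_i^{(t)}-\nu_i^\star-\sum_j P_{ij}(\nu_j^{(t)}-\nu_j^\star)$, the $\nabla\phi$-increments $\nabla\phi(x_i^{(t+1)})-\nabla\phi(y_i^{(t)})$, the $\nabla\varphi_i$-increments, and inner products with the normal-cone elements $g_i,g_i^\star$ (the latter being nonpositive by \eqref{definition of normal cone}, so they can be discarded after a sign check). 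I would then invoke the three-point identity \eqref{3-point property} on both the $\phi$ and $\varphi_i$ terms to rewrite the Bregman inner products as $B_\phi(x_i^\star,y_i^{(t)})-B_\phi(x_i^\star,x_i^{(t+1)})-B_\phi(x_i^{(t+1)},y_i^{(t)})$ and analogously for $\varphi_i$. These differences already match the $B_\phi$ and $B_{\varphi_i}$ parts of $V(t)-V(t+1)$ up to one mismatch: $V$ uses $B_\phi(x_i^\star,y_i^{(t+1)})$, not $B_\phi(x_i^\star,x_i^{(t+1)})$. This is exactly where Lemma~\ref{lemma Pythagorean}, applied to iterate $t+1$, is used: it converts $\sum_i B_\phi(x_i^\star,x_i^{(t+1)})$ into $\sum_i B_\phi(x_i^\star,y_i^{(t+1)})+\sum_{i,j}P_{ij}B_\phi(y_i^{(t+1)},x_j^{(t+1)})$, producing a nonnegative slack that can only help the descent.

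Next, the dual-variable terms are handled via the update \eqref{Bregman PDMM: dual update}, which gives $\bm{\nu}^{(t+1)}-\bm{\nu}^{(t)}=\tau((I_m-P)\otimes I_n)\bm{x}^{(t+1)}$. The cross term $\langle \bm{\nu}^\star-\bm{\nu}^{(t)},\bm{\nu}^{(t+1)}-\bm{\nu}^{(t)}\rangle$ produced by expanding $\tfrac{1}{2\tau\rho}\|\bm{\nu}^\star-\bm{\nu}^{(t+1)}\|_2^2$ telescopes precisely with the dual inner product arising from the primal optimality condition, and it leaves behind a $\tfrac{1}{2\tau\rho}\|\bm{\nu}^{(t+1)}-\bm{\nu}^{(t)}\|_2^2=\tfrac{\tau}{2\rho}\|((I_m-P)\otimes I_n)\bm{x}^{(t+1)}\|_2^2$ term. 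To match the $\tfrac{\gamma}{2}$-term in $R(t+1)$, I need to absorb this against the $-B_\phi(x_i^{(t+1)},y_i^{(t)})$ slack using strong convexity \eqref{phi strong convexity}; the factor $\sigma=\min\{1,n^{2/p-1}\}$ comes from the equivalence $\|\cdot\|_2\leq n^{(1/2-1/p)_+}\|\cdot\|_p$ for $p\in[1,\infty]$, so that $\|u-v\|_p^2\geq \sigma\|u-v\|_2^2$, after which the condition $\tau\leq \rho(\mu\sigma-\gamma)$ guarantees that enough of $\sum_i B_\phi(x_i^{(t+1)},y_i^{(t)})$ survives to cover both $\tfrac{\gamma}{2}\|((I-P)\otimes I_n)\bm{x}^{(t+1)}\|_2^2$ and the $B_\phi$-residual in $R(t+1)$. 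The $\delta_iB_{\varphi_i}$-residual drops out directly from the three-point identity applied to $\varphi_i$.

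The step I expect to be most delicate is this last bookkeeping one: tracking signs when splitting the Euclidean cross term via Young-type or strong-convexity estimates, verifying that the coefficients line up exactly with the definitions of $R$ and $V$, and ensuring the parameter condition \eqref{parameter Bregman PDMM} is the tightest one that makes every leftover term nonnegative. Once \eqref{telescop series} is proven, summability of $R(t+1)$ forces each of $\|((I-P)\otimes I_n)\bm{x}^{(t+1)}\|_2$, $B_\phi(x_i^{(t+1)},y_i^{(t)})$, and $B_{\varphi_i}(x_i^{(t+1)},x_i^{(t)})$ to $0$; strong convexity of $\phi$ turns the latter two into $x_i^{(t+1)}-y_i^{(t)}\to 0$ and $x_i^{(t+1)}-x_i^{(t)}\to 0$, so $\bm{\nu}^{(t+1)}-\bm{\nu}^{(t)}\to 0$ by the dual update; monotonicity of $V$ yields boundedness of the iterates, and passing to the limit along a convergent subsequence in \eqref{Bregman PDMM primal update: optimality} using closedness of $\partial f_i$ and $N_\mathcal{X}$ recovers the KKT conditions \eqref{KKT}.
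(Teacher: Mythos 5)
Your overall architecture matches the paper's: two subgradient inequalities (one from \eqref{Bregman PDMM primal update: optimality} evaluated at $x^\star$, one from \eqref{KKT: dual} evaluated at $x_i^{(t+1)}$), cancellation of the $f_i$ values, discarding the normal-cone terms by \eqref{definition of normal cone}, the three-point identity \eqref{3-point property}, Lemma~\ref{lemma Pythagorean} at iterate $t+1$, and the quadratic telescoping of the dual update. However, there is a genuine gap in precisely the step you flag as delicate: the absorption of the consensus-residual terms. After the dual telescoping you are left with $\tfrac{\tau}{2\rho}\norm{((I_m-P)\otimes I_n)\bm{x}^{(t+1)}}_2^2$ in addition to the $\tfrac{\gamma}{2}\norm{((I_m-P)\otimes I_n)\bm{x}^{(t+1)}}_2^2$ demanded by $R(t+1)$, and you propose to cover both using the $-\sum_i B_\phi(x_i^{(t+1)},y_i^{(t)})$ term via strong convexity. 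This cannot work, for two reasons. First, that term is already fully spent: $R(t+1)$ contains $\sum_i B_\phi(x_i^{(t+1)},y_i^{(t)})$ with coefficient one, and the three-point identity produces exactly $-\rho\sum_i B_\phi(x_i^{(t+1)},y_i^{(t)})$, leaving no surplus. Second, and more fundamentally, no inequality bounding $\norm{((I_m-P)\otimes I_n)\bm{x}^{(t+1)}}_2^2$ by a constant times $\sum_i\norm{x_i^{(t+1)}-y_i^{(t)}}^2$ can hold: take $x_i^{(t+1)}=y_i^{(t)}$ for all $i$ with the $y_i^{(t)}$ not in consensus; the right-hand side vanishes while the left-hand side does not.

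The correct mechanism is the term you set aside as mere ``nonnegative slack that can only help'': the $\sum_{i,j}P_{ij}B_\phi(y_i^{(t+1)},x_j^{(t+1)})$ produced by Lemma~\ref{lemma Pythagorean} at iterate $t+1$ is the \emph{only} quantity available that controls the consensus violation of $\bm{x}^{(t+1)}$. By \eqref{phi strong convexity} it dominates $\tfrac{\mu}{2}\sum_{i,j}P_{ij}\norm{y_i^{(t+1)}-x_j^{(t+1)}}_p^2$, and the paper's auxiliary Lemma~\ref{lemma residual & variance} --- which is where the positive semidefiniteness of $P$ in entry~(\ref{P matrix assumption}) of Assumption~\ref{basic assumption} is actually used, via $P-P^2\succeq 0$ --- gives $\norm{((I_m-P)\otimes I_n)\bm{x}^{(t+1)}}_2^2\leq \tfrac{1}{\sigma}\sum_{i,j}P_{ij}\norm{y_i^{(t+1)}-x_j^{(t+1)}}_p^2$. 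Combining the two, the condition $\tau/\rho+\gamma\leq\mu\sigma$ from \eqref{parameter Bregman PDMM} is exactly what makes the leftover nonnegative. Your plan never invokes $P\succeq 0$ and relies only on the scalar norm equivalence $\norm{w}_2^2\leq\sigma^{-1}\norm{w}_p^2$, which compares two norms of the same vector but does not connect $((I_m-P)\otimes I_n)\bm{x}^{(t+1)}$ to the pairwise differences $y_i^{(t+1)}-x_j^{(t+1)}$; that connection is the missing lemma. The remainder of your argument (summability of $R$, extraction of the KKT limit) is sound.
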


The sketch of the proof is as follows. First apply inequality \eqref{definition of subgradient} at \(\bm{x}^{(t+1)}\) and \(\bm{x}^\star\), which yields a non-negative inner product. Then use identity \eqref{3-point property} to break this inner product into three parts, each of which contributes to \(V(t), V(t+1),\) and \(R(t+1)\), respectively. Lemma~\ref{lemma Pythagorean}, entry (c) and (d) in Assumption~\ref{basic assumption}, together with parameter setting in \eqref{parameter Bregman PDMM} ensures that intermediate terms cancel each other, and finally we reach \eqref{telescop series}. Summing up \eqref{telescop series} from \(t=0\) to \(t=\infty\), we have \(\sum_{t=0}^\infty R(t+1)=V(0)-V(\infty)\leq V(0)\). Therefore, as \(t\to\infty\), we must have \(R(t+1)\to 0\), which implies that \(\{\bm{x}^{(t)}, \bm{\nu}^{(t)}\}\) satisfy \eqref{KKT} in the limit. 

In general, \eqref{parameter Bregman PDMM} implies that as \(p\) increases, step size \(\tau\) needs to decrease. See \cite[Remark 1]{wang2014bregman} for details.

The following theorem establishes the \(O(1/T)\) convergence rate of Algorithm~\ref{Bregman PDMM} in an ergodic sense via the Jensen's inequality.

\begin{theorem}
Suppose that Assumption~\ref{basic assumption} holds. Let the sequence \(\{\bm{y}^{(t)}, \bm{x}^{(t)}, \bm{\nu}^{(t)}\}\) be generated by Algorithm~\ref{Bregman PDMM}. Let \(V(k)\) be defined as in \eqref{Lyapunov function}, \(\mu, \tau, \rho, \gamma\) satisfy \eqref{parameter Bregman PDMM}, \(\bm{\nu}^{(0)}=0\) and \(\bar{\bm{x}}^{(T)}=\frac{1}{T}\sum_{t=1}^{T}\bm{x}^{(t)}\). Then
\begin{subequations}
\begin{align}
&\begin{aligned}
&\sum_{i\in\mathcal{V}} f_i(\bar{x}^{T}_i)-\sum_{i\in\mathcal{V}} f_i(x^\star_i)\\
&\leq \frac{1}{T}\left(\rho\sum_{i\in\mathcal{V}} B_\phi(x^\star_i, y_i^{(0)})+\sum_{i\in\mathcal{V}} \delta_i B_{\varphi_i}(x^\star_i, x_i^{(0)})\right),
\end{aligned}\label{Bregman PDMM function value 1/T}\\
&\frac{1}{2}\norm{((I_m-P)\otimes I_n)\bar{\bm{x}}^{(T)}}_2^2\leq \frac{V(0)}{\gamma T}.\label{Bregman PDMM residual 1/T}
\end{align}
\label{Bregman PDMM 1/T convergence}
\end{subequations}
\label{theorem Bregman PDMM 1/T convergence}
\end{theorem}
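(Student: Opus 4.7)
The residual bound \eqref{Bregman PDMM residual 1/T} is a direct consequence of Theorem~\ref{theorem Bregman PDMM global convergence} combined with Jensen's inequality. Summing \eqref{telescop series} over $t = 0, \ldots, T-1$ and retaining only the $\frac{\gamma}{2}\norm{((I_m - P)\otimes I_n)\bm{x}^{(t+1)}}_2^2$ piece of $R(t+1)$ gives $\frac{\gamma}{2}\sum_{t=1}^{T}\norm{((I_m - P)\otimes I_n)\bm{x}^{(t)}}_2^2 \leq V(0) - V(T) \leq V(0)$. Applying Jensen's inequality to the convex function $\norm{\cdot}_2^2$ and the running mean $\bar{\bm{x}}^{(T)}$ then delivers \eqref{Bregman PDMM residual 1/T}.

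For \eqref{Bregman PDMM function value 1/T}, I would first exploit two consequences of Assumption~\ref{basic assumption}: that $(P\otimes I_n)\bm{x}^\star = \bm{x}^\star$ together with the irreducibility of $P$ forces all blocks of $\bm{x}^\star$ to coincide at a common value $x^\star \in \mathcal{X}$, and that $\sum_{i\in\mathcal{V}}[\nu_i^{(t)} - \sum_{j\in\mathcal{V}} P_{ij}\nu_j^{(t)}] = 0$ because $P$ is symmetric stochastic. Combining the optimality condition \eqref{Bregman PDMM primal update: optimality}, the convexity of $f_i$, the three-point identity \eqref{3-point property} applied to $\phi$ and to $\varphi_i$, and $\langle g_i, x^\star - x_i^{(t+1)}\rangle \leq 0$ for $g_i \in N_\mathcal{X}(x_i^{(t+1)})$ then produces the one-step bound
\begin{equation*}
\begin{aligned}
&\sum_{i\in\mathcal{V}}[f_i(x_i^{(t+1)}) - f_i(x^\star)] \leq -\langle \bm{x}^{(t+1)} - \bm{x}^\star, ((I_m - P)\otimes I_n)\bm{\nu}^{(t)}\rangle\\
&\quad + \rho\sum_{i\in\mathcal{V}}[B_\phi(x^\star, y_i^{(t)}) - B_\phi(x^\star, x_i^{(t+1)}) - B_\phi(x_i^{(t+1)}, y_i^{(t)})]\\
&\quad + \sum_{i\in\mathcal{V}}\delta_i[B_{\varphi_i}(x^\star, x_i^{(t)}) - B_{\varphi_i}(x^\star, x_i^{(t+1)}) - B_{\varphi_i}(x_i^{(t+1)}, x_i^{(t)})].
\end{aligned}
\end{equation*}
Summing over $t = 0, \ldots, T-1$, the dual inner product rewrites via \eqref{Bregman PDMM: dual update} as $-\frac{1}{\tau}\langle \bm{\nu}^{(t+1)} - \bm{\nu}^{(t)}, \bm{\nu}^{(t)}\rangle$ and telescopes under $\bm{\nu}^{(0)} = 0$ to a non-positive $-\frac{1}{2\tau}\norm{\bm{\nu}^{(T)}}_2^2$ plus the surplus $\frac{1}{2\tau}\sum_{t=0}^{T-1}\norm{\bm{\nu}^{(t+1)} - \bm{\nu}^{(t)}}_2^2 = \frac{\tau}{2}\sum_{t=0}^{T-1}\norm{((I_m - P)\otimes I_n)\bm{x}^{(t+1)}}_2^2$; the $B_{\varphi_i}$ terms telescope directly to $\sum_i \delta_i B_{\varphi_i}(x^\star, x_i^{(0)})$; and Lemma~\ref{lemma Pythagorean} applied at each $t+1$ with $u = x^\star$ upgrades the telescoping of the $B_\phi$ terms by producing both $\rho\sum_i B_\phi(x^\star, y_i^{(0)})$ and an additional negative slack $-\rho\sum_{t=0}^{T-1}\sum_{i,j\in\mathcal{V}} P_{ij} B_\phi(y_i^{(t+1)}, x_j^{(t+1)})$.

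The heart of the argument is the absorption of the dual surplus by this Lemma~\ref{lemma Pythagorean} slack. I plan to invoke the inequality (the same one implicit in the proof of Theorem~\ref{theorem Bregman PDMM global convergence}) $\sum_{i,j\in\mathcal{V}} P_{ij} B_\phi(y_i^{(t+1)}, x_j^{(t+1)}) \geq \frac{\mu\sigma}{2}\norm{((I_m - P)\otimes I_n)\bm{x}^{(t+1)}}_2^2$, which follows from the $\mu$-strong convexity of $\phi$ with respect to $\norm{\cdot}_p$ and the norm-equivalence factor $\sigma = \min\{1, n^{2/p - 1}\}$. The step-size condition $\tau \leq \rho(\mu\sigma - \gamma)$ from \eqref{parameter Bregman PDMM} then forces $\frac{\tau}{2} - \frac{\rho\mu\sigma}{2} \leq -\frac{\rho\gamma}{2} \leq 0$, so the combined coefficient of $\sum_t \norm{((I_m - P)\otimes I_n)\bm{x}^{(t+1)}}_2^2$ is non-positive and the whole residual may be dropped; the remaining $-\rho\sum_t\sum_i B_\phi(x_i^{(t+1)}, y_i^{(t)})$ and $-\sum_t\sum_i \delta_i B_{\varphi_i}(x_i^{(t+1)}, x_i^{(t)})$ are also $\leq 0$ and discarded. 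Dividing by $T$ and applying Jensen's inequality to each convex $f_i$ with $\bar{x}_i^{(T)} = \frac{1}{T}\sum_{t=1}^{T} x_i^{(t)}$ then yields \eqref{Bregman PDMM function value 1/T}. The main obstacle I expect to have to justify carefully is this dual-to-Bregman absorption, since the target bound has no $\bm{\nu}^\star$ term and the surplus must be killed solely through the Lemma~\ref{lemma Pythagorean} slack under the prescribed step size.
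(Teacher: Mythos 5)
Your proposal is correct and follows essentially the same route as the paper: the one-step bound from the primal optimality condition plus the three-point identity and Lemma~\ref{lemma Pythagorean}, the rewriting of the dual inner product via \eqref{Bregman PDMM: dual update} so that it telescopes under \(\bm{\nu}^{(0)}=0\) with a surplus \(\frac{\tau}{2}\norm{((I_m-P)\otimes I_n)\bm{x}^{(t+1)}}_2^2\), the absorption of that surplus by the \(\mu\)-strong-convexity/norm-equivalence lower bound on \(\sum_{i,j}P_{ij}B_\phi(y_i^{(t+1)},x_j^{(t+1)})\) under \eqref{parameter Bregman PDMM}, and a final Jensen step; the residual bound likewise matches the paper's summation of \eqref{telescop series}. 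No gaps.
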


Theorem~\ref{theorem Bregman PDMM 1/T convergence} shows that the complexity bound of Algorithm~\ref{Bregman PDMM} with respect to dimensionality \(n\) is determined by the Bregman divergence term. The following corollary gives an example where, with a properly chosen Bregman divergence, Algorithm~\ref{Bregman PDMM} outperforms Algorithm~\ref{PDMM} by a factor of \(O(n/\ln n)\) \cite[Remark 2]{wang2014bregman}.

\begin{corollary}
Suppose that assumption~\ref{basic assumption} holds. Suppose \(\norm{g_i}^2_2\leq M_0\) for all \(g_i\in\partial f_i(x^\star_i)\) and \(i\in\mathcal{V}\), where \(M_0\in\reals_+\). Let the sequence \(\{\bm{y}^{(t)}, \bm{x}^{(t)}, \bm{\nu}^{(t)}\}\) be generated by Algorithm~\ref{Bregman PDMM}. Let \(\gamma=1/4\), \(\tau=\rho/2\), \(\delta_{\max}=\max_i \delta_i\), \(\bm{\nu}^{(0)}=0\), \(\bm{x}^{(0)}=\mathbf{1}_m\otimes(\frac{1}{n}\mathbf{1}_n)\) and \(\bar{\bm{x}}^{(T)}=\frac{1}{T}\sum_{t=1}^{T}\bm{x}^{(t)}\), \(\mathcal{X}\) be the probability simplex, \(\phi\) and \(\varphi_i\) be the negative entropy function, then 
\begin{subequations}
\begin{align}
&\sum_{i\in\mathcal{V}} f_i(\bar{x}^{(T)}_i)-\sum_{i\in\mathcal{V}} f_i(x^\star_i)\leq \frac{m(\rho+\delta_{\max})\ln n}{T}, \label{cor1: objective function value}\\
&\begin{aligned}
&\frac{1}{2}\norm{((I_m-P)\otimes I_n)\bar{\bm{x}}^{(T)}}_2^2\\
\leq &\frac{4m M_0}{ \rho^2(1-\lambda_2(P))^2T}+\frac{ 4m(\rho+\delta_{\max})\ln n}{\rho T}.
\end{aligned}\label{cor1:consensus residual}
\end{align}
\label{cor1: eqn1}
\end{subequations}
\label{corollary 1}
\end{corollary}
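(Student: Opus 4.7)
The plan is to apply Theorem~\ref{theorem Bregman PDMM 1/T convergence} after verifying its hypotheses for the specialized data, and then to bound every problem-dependent quantity appearing on the right hand sides of \eqref{Bregman PDMM 1/T convergence}. First I would check parameter compatibility: Pinsker's inequality shows that the negative entropy is \(1\)-strongly convex with respect to the \(\ell_1\)-norm on the probability simplex, so \eqref{phi strong convexity} holds with \((\mu,p)=(1,1)\) and hence \(\sigma=\min\{1,n^{2/p-1}\}=1\); the choice \(\gamma=1/4\), \(\tau=\rho/2\) then satisfies \eqref{parameter Bregman PDMM}. Next, because \(\bm{x}^{(0)}=\mathbf{1}_m\otimes(\tfrac{1}{n}\mathbf{1}_n)\) is already in consensus and \(P\) is stochastic, the \(t=0\) instance of \eqref{Bregman PDMM: mirror Markov update} collapses to \(y_i^{(0)}=\underset{y_i\in\mathcal{X}}{\argmin}\,B_\phi(y_i,\tfrac{1}{n}\mathbf{1}_n)=\tfrac{1}{n}\mathbf{1}_n\). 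A direct computation of the KL divergence then yields \(B_\phi(x_i^\star,\tfrac{1}{n}\mathbf{1}_n)=\ln n+\sum_k x_{i,k}^\star\ln x_{i,k}^\star\le\ln n\) for each \(i\), and the same bound holds for \(B_{\varphi_i}(x_i^\star,x_i^{(0)})\).

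With these ingredients in hand, substituting into \eqref{Bregman PDMM function value 1/T} yields \eqref{cor1: objective function value} immediately. For \eqref{cor1:consensus residual} I would specialize \eqref{Bregman PDMM residual 1/T} to \(\gamma=1/4\), obtaining \(\tfrac{1}{2}\norm{((I_m-P)\otimes I_n)\bar{\bm{x}}^{(T)}}_2^2\le 4V(0)/T\), and split \(V(0)\) via \eqref{Lyapunov function} as \(V(0)\le\norm{\bm\nu^\star}_2^2/\rho^2+m\ln n+m\delta_{\max}\ln n/\rho\); the last two summands account exactly for the second term of \eqref{cor1:consensus residual}, so the task reduces to proving \(\norm{\bm\nu^\star}_2^2\le mM_0/(1-\lambda_2(P))^2\).

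This last bound is the main obstacle, and I would handle it via the KKT conditions, the spectral gap of \(P\), and a careful choice of normal cone element. Stacking \eqref{KKT: dual} gives \(((I_m-P)\otimes I_n)\bm\nu^\star=-(\bm\zeta+\bm h)\) for some subgradients \(\zeta_i\in\partial f_i(x_i^\star)\) (which satisfy \(\norm{\zeta_i}_2^2\le M_0\) by hypothesis) and normal cone elements \(h_i\in N_{\mathcal{X}}(x_i^\star)\). Since the KKT conditions are invariant under shifts of \(\bm\nu^\star\) within the null space \(\mathbf{1}_m\otimes\reals^n\) of \((I_m-P)\otimes I_n\), I may take \(\bm\nu^\star\in\mathbf{1}_m^\perp\otimes\reals^n\); Assumption~\ref{basic assumption}(d) together with \(\lambda_2(P)<1\) then gives \(\norm{((I_m-P)\otimes I_n)\bm\nu^\star}_2\ge(1-\lambda_2(P))\norm{\bm\nu^\star}_2\). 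The remaining step is to show \(\norm{\bm\zeta+\bm h}_2^2\le mM_0\) by exploiting the freedom to split \(\bm\zeta+\bm h\) into subgradient and normal cone parts. By \eqref{KKT: primal} one has \(x_i^\star\equiv x^\star\) for all \(i\); summing \eqref{KKT: dual} over \(i\) forces \(\sum_i h_i=-\sum_i\zeta_i\), so \(-\sum_i\zeta_i\in N_\mathcal{X}(x^\star)\) and, because \(N_\mathcal{X}(x^\star)\) is a cone, \(-\bar\zeta:=-\tfrac{1}{m}\sum_i\zeta_i\in N_\mathcal{X}(x^\star)\) as well. Choosing \(h_i=-\bar\zeta\) for every \(i\) gives \(\zeta_i+h_i=\zeta_i-\bar\zeta\), and the elementary identity \(\sum_i\norm{\zeta_i-\bar\zeta}_2^2=\sum_i\norm{\zeta_i}_2^2-m\norm{\bar\zeta}_2^2\le mM_0\) closes the argument. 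Substituting the resulting bound on \(\norm{\bm\nu^\star}_2^2\) back into \(4V(0)/T\) produces \eqref{cor1:consensus residual}.
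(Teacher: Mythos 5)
Your proof is correct and follows the same overall route as the paper: specialize Theorem~\ref{theorem Bregman PDMM 1/T convergence} to the entropy/simplex setting, bound the Bregman terms in \(V(0)\) by \(\ln n\), and control \(\norm{\bm\nu^\star}_2\) through the spectral gap of \(I_m-P\) using the fact that \(\bm\nu^\star\) can be taken orthogonal to the null space \(\mathbf{1}_m\otimes\reals^n\). The paper, however, disposes of the entire dual-variable bound by citing Lemma~1 of the PDMM paper \cite{meng2015proximal}, which is stated for the unconstrained case \(\mathcal{X}=\reals^n\) and hence has no normal-cone contribution in \eqref{KKT: dual}; your argument supplies the missing piece for the simplex-constrained setting. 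Specifically, your observation that \(\sum_i h_i=-m\bar\zeta\) forces \(-\bar\zeta\in N_\mathcal{X}(x^\star)\), so that one may re-select the saddle point with \(h_i=-\bar\zeta\) and invoke the variance identity \(\sum_i\norm{\zeta_i-\bar\zeta}_2^2\le\sum_i\norm{\zeta_i}_2^2\le mM_0\), is a genuine addition: it shows the constant \(mM_0\) survives the presence of the constraint set, which the paper's one-line citation glosses over. All numerical constants check out (\(\mu=1\), \(p=1\), \(\sigma=1\), \(1/(2\tau\rho)=1/\rho^2\), \(1/\gamma=4\)), and the legitimacy of choosing the minimal-norm \(\bm\nu^\star\) is guaranteed because Theorem~\ref{theorem Bregman PDMM 1/T convergence} holds for any saddle point.
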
 
Observe that \eqref{cor1:consensus residual} implies that the convergence bounds on consensus residual can be tightened by designing \(\lambda_2(P)\), which can be achieved efficiently via convex optimization \cite{boyd2004fastest}. 

\section{Numerical examples}\label{numerical examples}
In this section, we present numerical examples to demonstrate the performance of Algorithm~\ref{Bregman PDMM}.  Consider the following special case of \eqref{distributed optimization problem} defined over graph \(\mathcal{G}=(\mathcal{V}, \mathcal{E})\): 
\begin{equation}
\underset{u\in\mathcal{X}}{\mbox{minimize}}  \sum_{i=1}^m \left\langle c_i, u\right\rangle,
\label{experiment problem}
\end{equation}
where \(\mathcal{X}\) is the probability simplex. Such problems have potential applications in, for example, policy design in multi-agent decision making \cite{el2016convex, zhang2016decision}.

We use Algorithm~\ref{PDMM} as benchmark since it includes other popular variants of distributed ADMM \cite{iutzeler2013asynchronous, ling2013decentralized, shi2014linear, chang2015multi} as special cases. Compared to Algorithm~\ref{PDMM} which needs efficient Euclidean projection onto probability simplex \cite{duchi2008efficient}, Algorithm~\ref{Bregman PDMM} can solve \eqref{experiment problem} with closed-form updates suited for massive parallelism \cite{wang2014bregman}.

We compare the performance of Algorithm~\ref{Bregman PDMM} with Algorithm~\ref{PDMM} on problem \eqref{experiment problem}, where entries in \(\{c_1, \ldots, c_m\}\) are sampled from standard normal distribution, graph \(\mathcal{G}\) is randomly generated with edge probability \(0.2\) \cite[p.~90]{mesbahi2010graph}. We use the following parameter setting: \(\rho=1\), \(\tau=1/2\), \(\delta_i=0\) for all \(i\in\mathcal{V}\), \(\phi\) is the negative entropy function. We demonstrate the convergence described by \eqref{cor1: eqn1} in Figure~\ref{experiment}. We observe that Algorithm~\ref{Bregman PDMM} significantly outperforms Algorithm~\ref{PDMM}, especially for large scale problem, and optimizing \(\lambda_2(P)\) further accelerates convergence considerably. 
\begin{figure}
\centering
     \subfloat[\(m=20, n=1000\)\label{exp1}]{%
       \includegraphics[width=0.23\textwidth]{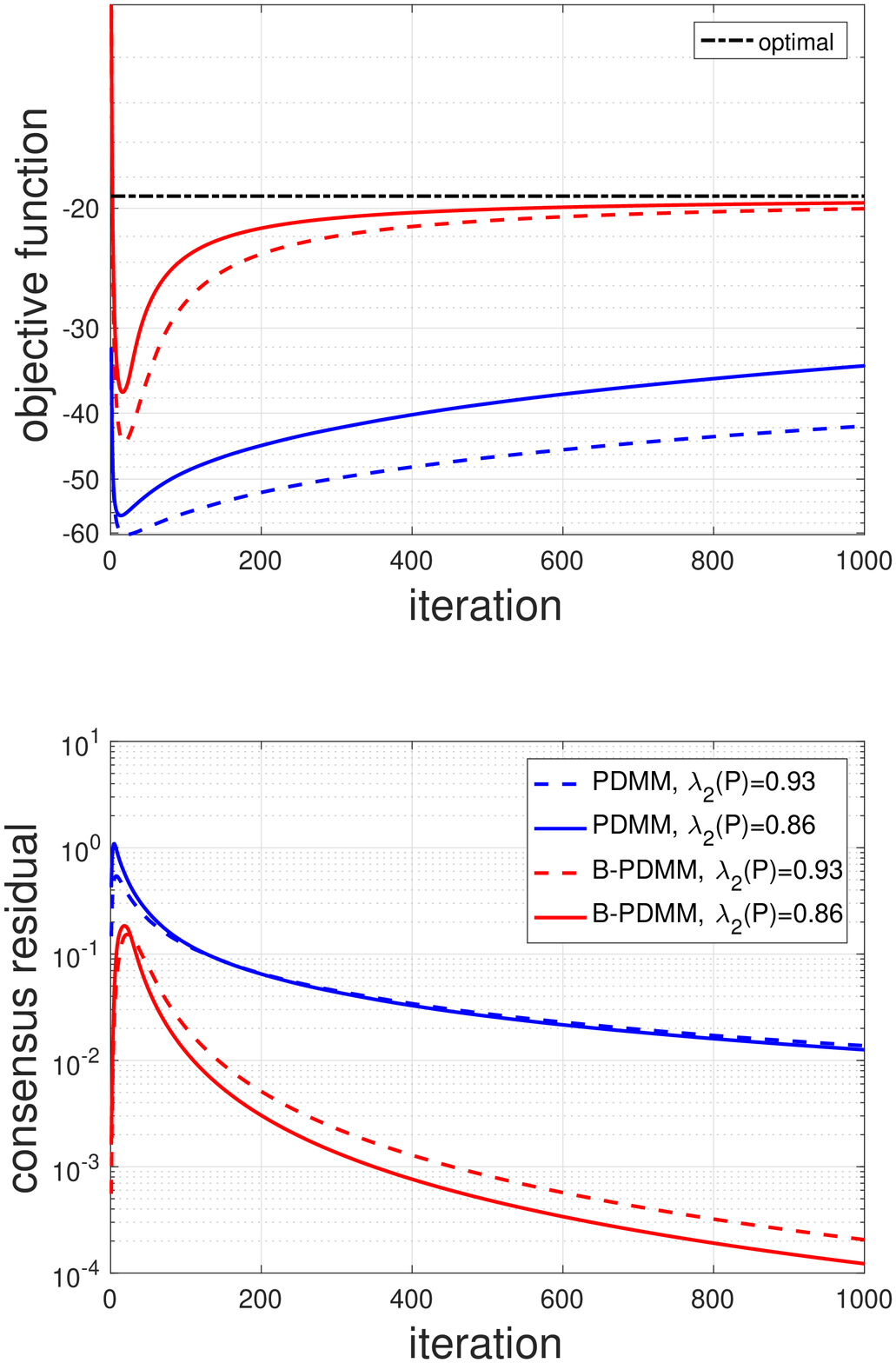}
     }
     \hfill
     \subfloat[\(m=100, n=10000\)\label{exp2}]{%
       \includegraphics[width=0.23\textwidth]{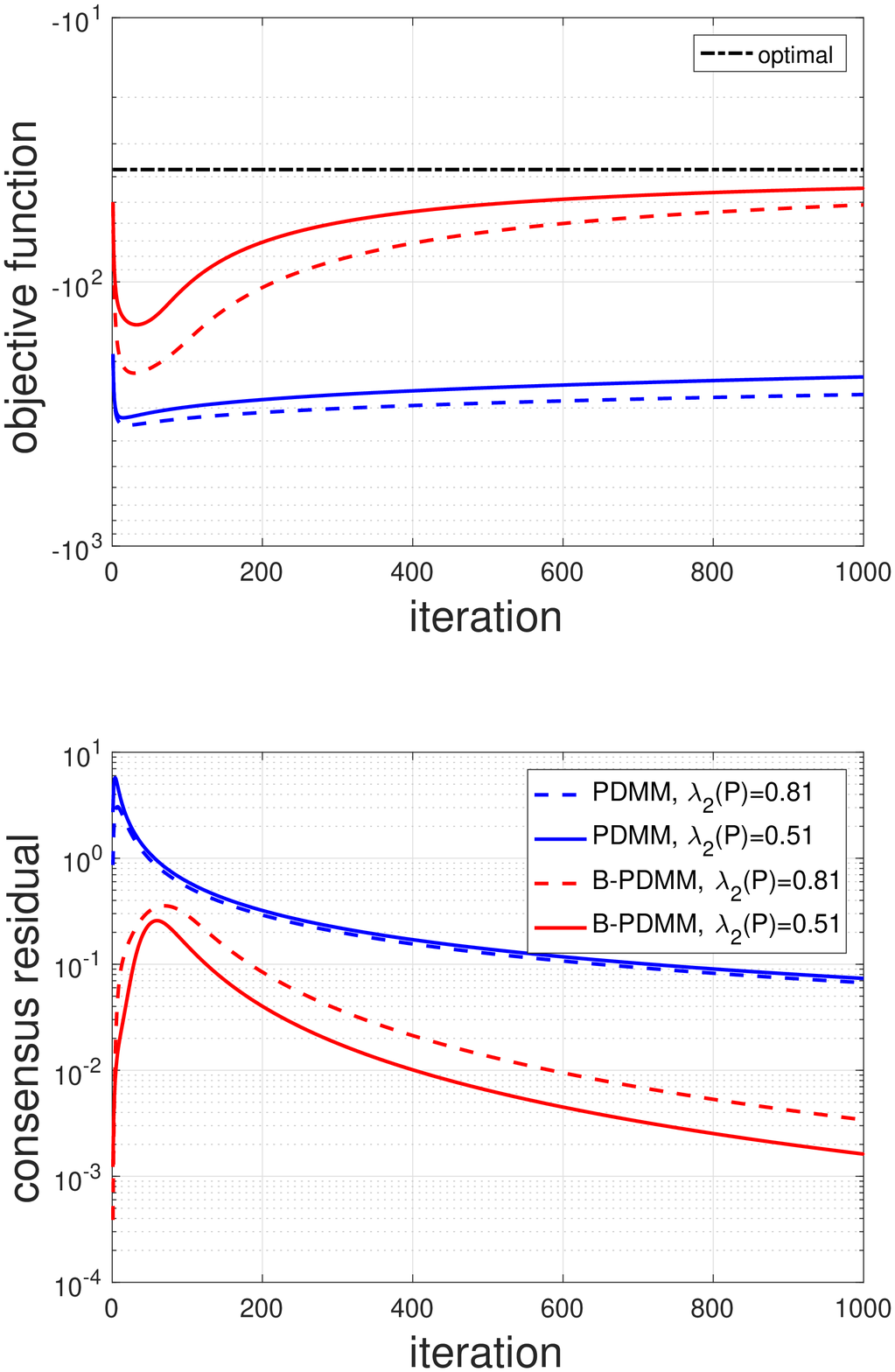}
     }
     \caption{Comparison of Bregman PDMM and PDMM with different \(\lambda_2(P)\).}
     \label{experiment}
\end{figure}

\section{Conclusions}\label{conclusion}
In order to solve distributed optimization over a graph, we generalize PDMM to Bregman PDMM based on mirror averaging. The global convergence and iteration complexity of Bregman PDMM are established, along with its improvement over PDMM. We can further enhance its performance by designing the averaging matrix. Future work directions include the variants of the proposed algorithm for asynchronous and stochastic updates, time-varying graphs, and applications in multi-agent decision making. 





\section*{ACKNOWLEDGMENT}
The authors would like to thank De Meng and Maryam Fazel for many helpful discussions and suggestions. The anonymous reviewers are gratefully acknowledged. 

\bibliographystyle{IEEEtran}
\bibliography{IEEEabrv,reference}

\newpage
\section*{APPENDIX}

We will use the following Lemma.
\begin{lemma}
If \(P\in\reals^{m\times m}\) is symmetric, stochastic and positive semi-definite, then 
\begin{equation}
\begin{aligned}
&\norm{((I_m-P)\otimes I_n)\bm{u}}_2^2\leq \frac{1}{\sigma}\sum_{i, j\in\mathcal{V}}P_{ij}\norm{v_i - u_j}_p^2
\end{aligned}
\label{lemma residual & variance: eqn1}
\end{equation}
where \(\norm{\cdot}_p\) denote \(l_p\) norm and \(\sigma=\min\{1, n^{\frac{2}{p}-1}\}\), \(\bm{u}=[u_1^\top, \cdots, u_m^\top]^\top, \bm{v}=[v_1^\top, \cdots, v_m^\top]^\top\in\mathcal{X}^m\), \(\mathcal{X}\) is a closed convex set.
\label{lemma residual & variance}
\end{lemma}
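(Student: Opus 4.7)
The plan is to reduce the $l_p$-norm inequality to an $l_2$-norm version via a standard norm equivalence, and then establish the $l_2$ version through an exact algebraic identity in which the positive semidefiniteness of $P$ plays the essential role. For the reduction, I would invoke the elementary inequality \(\norm{x}_p^2\geq \sigma\norm{x}_2^2\) for every \(x\in\reals^n\), where \(\sigma=\min\{1,n^{2/p-1}\}\); this follows from H\"older's (or the power-mean) inequality, treating the cases \(p\geq 2\) and \(p\leq 2\) separately. Applying it term-by-term to each summand \(\norm{v_i-u_j}_p^2\) on the right-hand side reduces the claim to the \(l_2\) bound
\[
\norm{((I_m-P)\otimes I_n)\bm{u}}_2^2\leq \sum_{i,j\in\mathcal{V}}P_{ij}\norm{v_i-u_j}_2^2.
\]

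Next, I would expand both sides explicitly. Using symmetry of \(P\) together with stochasticity (so both row and column sums of \(P\) equal one), the right-hand side simplifies to \(\norm{\bm{v}}_2^2+\norm{\bm{u}}_2^2-2\bm{v}^\top(P\otimes I_n)\bm{u}\), while the left-hand side equals \(\bm{u}^\top((I_m-P)^2\otimes I_n)\bm{u}\). Completing the square in \(\bm{v}\) around \((P\otimes I_n)\bm{u}\), and using \(\norm{(P\otimes I_n)\bm{u}}_2^2=\bm{u}^\top(P^2\otimes I_n)\bm{u}\), one arrives at the identity
\[
\sum_{i,j}P_{ij}\norm{v_i-u_j}_2^2-\norm{((I_m-P)\otimes I_n)\bm{u}}_2^2
=\norm{\bm{v}-(P\otimes I_n)\bm{u}}_2^2+2\bm{u}^\top\bigl(P(I_m-P)\otimes I_n\bigr)\bm{u}.
\]

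It then remains to show both terms on the right are non-negative. The first is manifestly a squared norm, and the second is precisely where the PSD hypothesis on \(P\) becomes indispensable: under Assumption~\ref{basic assumption}(\ref{P matrix assumption}), the eigenvalues of \(P\) lie in \([0,1]\), so both \(P\) and \(I_m-P\) are symmetric PSD; since they commute (both are polynomials in \(P\)), their product \(P(I_m-P)\) is again symmetric PSD, yielding non-negativity of the Kronecker-lifted quadratic form. Combining with the norm-equivalence step then completes the proof. The main obstacle I anticipate is \emph{identifying} this identity in the first place: the naive Jensen-type bound \(\sum_i\norm{v_i-\sum_j P_{ij}u_j}_2^2\leq \sum_{i,j}P_{ij}\norm{v_i-u_j}_2^2\) has \(v_i\) inside the outer norm rather than the \(u_i\) required to produce \(\norm{((I_m-P)\otimes I_n)\bm{u}}_2^2\), so spectral information about \(P\) is genuinely required to recover the correct left-hand side.
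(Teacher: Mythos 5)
Your proof is correct and essentially the same as the paper's: both reduce to the $l_2$ case via $\norm{x}_2^2\leq \sigma^{-1}\norm{x}_p^2$ and both ultimately rest on the positive semidefiniteness of $P-P^2$, your completed square $\norm{\bm{v}-(P\otimes I_n)\bm{u}}_2^2$ being exactly the paper's observation that $\sum_k P_{ik}u_k$ minimizes $w\mapsto\sum_j P_{ij}\norm{w-u_j}_2^2$ over $\mathcal{X}$. Writing it as a single exact identity is a slightly cleaner packaging (and shows the convexity of $\mathcal{X}$ is not actually needed), but the ingredients and their roles are in one-to-one correspondence with the paper's argument.
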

\begin{proof}
First, observe that if \(P\) is symmetric, stochastic, irreducible and positive semi-definite,  \(P-P^2\) is positive semi-definite \cite[Theorem 8.4.4]{horn2012matrix}. Since \(P\mathbf{1}_m=P^\top\mathbf{1}_m=\mathbf{1}_m\), we can show the following
\begin{equation*}
\begin{aligned}
&\sum_{i, j\in\mathcal{V}}P_{ij}\norm{\sum_{k\in\mathcal{V}} P_{ik}u_k-u_j}_2^2\\
= & \norm{\bm{u}}_2^2-\norm{(P\otimes I_n) \bm{u}}_2^2\\
\geq &\norm{\bm{u}}_2^2-\norm{(P\otimes I_n)\bm{u}}_2^2 - 2\langle \bm{u}, ((P-P^2)\otimes I_n)\bm{u}\rangle\\
=&\norm{((I_m-P)\otimes I_n)\bm{u}}_2^2
\end{aligned}
\end{equation*} 
Hence \eqref{lemma residual & variance: eqn1} holds due to the fact that 
\begin{equation*}
\sum_{k\in\mathcal{V}} P_{ik}u_k = \underset{w\in\mathcal{X}}{\argmin}\sum_{j\in\mathcal{V}} P_{ij}\norm{w-u_j}_2^2,
\end{equation*}
for all \(i\in\mathcal{V}\), and that \(\norm{w}_2^2\leq 1/\sigma\norm{w}_p^2\) for all \(w\in\reals^n\) where \(\sigma = \min\{1, n^{\frac{2}{p}-1}\}\) \cite[Theorem~1]{wang2014bregman}.
\end{proof}
\subsection{Lemma~\ref{lemma Pythagorean} }
\begin{proof}
Since \(P_{ij}=0\) if \(j\notin \mathcal{N}(i)\), the optimality condition for \eqref{mirror Markov mixing dynamics} can be written as follows: for any \(u\in\mathcal{X}\), we have 
\begin{equation*}
\sum_{j\in\mathcal{V}}P_{ij}\langle \nabla\phi(y_i^{(t)})-\nabla \phi(x_j^{(t)}), u-y_i^{(t)}\rangle\geq 0
\end{equation*}
Using three point property \eqref{3-point property}, we have
\begin{equation}
\begin{aligned}
&\sum_{j\in\mathcal{V}}P_{ij}B_\phi(u, x_j^{(t)})-\sum_{j\in\mathcal{V}}P_{ij}B_\phi(u, y_i^{(t)}) \\
\geq&\sum_{j\in\mathcal{V}}P_{ij}B_\phi(y_i^{(t)}, x_j^{(t)})
\end{aligned}
\label{lemma Pythagoream: eqn2}
\end{equation}
Summing \eqref{lemma Pythagoream: eqn2} over all \(i\) completes the proof.
\end{proof}

\subsection{Theorem~\ref{theorem Bregman PDMM global convergence} }
\begin{proof}
Since \(P\) is irreducible, \(\bm{x}^\star\) satisfy \eqref{KKT: primal} if and only if there exists \(x^\star\in\mathcal{X}\) such that \(\bm{x}^\star=\mathbf{1}_m\otimes x^\star\). Substitute \eqref{Bregman PDMM primal update: optimality} into \eqref{definition of subgradient} we have: there exists \(g_i\in N_\mathcal{X}(x_i^{(t+1)})\) for all \(i\) such that
\begin{equation}
\begin{aligned}
&\sum_{i\in\mathcal{V}} f_i(x_i^{(t+1)})-\sum_{i\in\mathcal{V}} f_i(x^\star)\\
\leq &\langle -\bm{\nu}^{(t)},((I_m-P)\otimes I_n)\bm{x}^{(t+1)}  \rangle\\
& + \rho\sum_{i\in\mathcal{V}}\langle \nabla\phi(x_i^{(t+1)})-\nabla\phi(y_i^{(t)}), x^\star-x_i^{(t+1)}\rangle\\
& +  \sum_{i\in\mathcal{V}}\delta_i\langle \nabla\varphi_i(x_i^{(t+1)})-\nabla\varphi_i(x_i^{(t)}), x^\star-x_i^{(t+1)}\rangle\\
& - \sum_{i\in\mathcal{V}}\langle g_i, x_i^{(t+1)}-x^\star\rangle\\
\overset{\eqref{definition of normal cone}}{\leq} &\langle -\bm{\nu}^{(t)},((I_m-P)\otimes I_n)\bm{x}^{(t+1)}  \rangle\\
& + \rho\sum_{i\in\mathcal{V}}\langle \nabla\phi(x_i^{(t+1)})-\nabla\phi(y_i^{(t)}), x^\star-x_i^{(t+1)}\rangle\\
& +  \sum_{i\in\mathcal{V}}\delta_i\langle \nabla\varphi_i(x_i^{(t+1)})-\nabla\varphi_i(x_i^{(t)}), x^\star-x_i^{(t+1)}\rangle\\
\overset{\eqref{3-point property}}{\leq} & \langle -\bm{\nu}^{(t)},((I_m-P)\otimes I_n)\bm{x}^{(t+1)}  \rangle\\
&+ \rho \sum_{i\in\mathcal{V}} B_\phi(x^\star, y_i^{(t)}) +  \sum_{i\in\mathcal{V}} \delta_i B_{\varphi_i}(x^\star, x_i^{(t)})\\
&-\rho \sum_{i\in\mathcal{V}}B_\phi(x^\star, x_i^{(t+1)}) - \sum_{i\in\mathcal{V}}\delta_i B_{\varphi_i}(x^\star, x_i^{(t+1)})\\ 
& -\rho \sum_{i\in\mathcal{V}}B_\phi(x_i^{(t+1)}, y_i^{(t)}) -  \sum_{i\in\mathcal{V}}\delta_i B_{\varphi_i}(x_i^{(t+1)}, x_i^{(t)})\\
\overset{\eqref{lemma Pythagorean: eqn1} }{\leq} & \langle -\bm{\nu}^{(t)},((I_m-P)\otimes I_n)\bm{x}^{(t+1)}  \rangle\\
& + \rho \sum_{i\in\mathcal{V}} B_\phi(x^\star, y_i^{(t)}) +  \sum_{i\in\mathcal{V}} \delta_i B_{\varphi_i}(x^\star, x_i^{(t)})\\
& -\rho \sum_{i\in\mathcal{V}}B_\phi(x^\star, y_i^{(t+1)}) -  \sum_{i\in\mathcal{V}}\delta_i B_{\varphi_i}(x^\star, x_i^{(t+1)})\\
& -\rho \sum_{i\in\mathcal{V}}B_\phi(x_i^{(t+1)}, y_i^{(t)}) -  \sum_{i\in\mathcal{V}}\delta_i B_{\varphi_i}(x_i^{(t+1)}, x_i^{(t)})\\ 
& -\rho\sum_{i, j\in\mathcal{V}}P_{ij}B_\phi(y_i^{(t+1)}, x_j^{(t+1)})\\
\overset{\eqref{phi strong convexity} }{\leq}& \langle -\bm{\nu}^{(t)},((I_m-P)\otimes I_n)\bm{x}^{(t+1)}  \rangle\\
& + \rho \sum_{i\in\mathcal{V}} B_\phi(x^\star, y_i^{(t)}) +  \sum_{i\in\mathcal{V}} \delta_i B_{\varphi_i}(x^\star, x_i^{(t)})\\
& -\rho \sum_{i\in\mathcal{V}}B_\phi(x^\star, y_i^{(t+1)}) -  \sum_{i\in\mathcal{V}}\delta_i B_{\varphi_i}(x^\star, x_i^{(t+1)})\\
& -\rho \sum_{i\in\mathcal{V}}B_\phi(x_i^{(t+1)}, y_i^{(t)}) -  \sum_{i\in\mathcal{V}}\delta_i B_{\varphi_i}(x_i^{(t+1)}, x_i^{(t)})\\ 
& -\frac{\rho\mu}{2}\sum_{i, j\in\mathcal{V}}P_{ij}\norm{y_i^{(t+1)} - x_j^{(t+1)}}_p^2
\end{aligned}
\label{theorem Bregman PDMM global convergence: eqn1}
\end{equation}

Similarly, substitute \eqref{KKT: dual} into \eqref{definition of subgradient} we have
\begin{equation}
\begin{aligned}
&\sum_{i\in\mathcal{V}} f_i(x^\star)-\sum_{i\in\mathcal{V}} f_i(x_i^{(t+1)})\\
\leq & \langle \bm{\nu}^\star, ((I_m-P)\otimes I_n)\bm{x}^{(t+1)}\rangle
\end{aligned}
\label{theorem Bregman PDMM global convergence: eqn2}
\end{equation}
Notice if we let \(\phi(u)=\frac{1}{2}\norm{u}_2^2\) in \eqref{3-point property}, we can show the following using \eqref{Bregman PDMM: dual update}.
\begin{equation}
\begin{aligned}
& \langle \bm{\nu}^\star-\bm{\nu}^{(t)}, ((I_m-P)\otimes I_n)\bm{x}^{(t+1)}\rangle\\
=  &\frac{1}{2\tau}\norm{\bm{\nu}^\star-\bm{\nu}^{(t)}}_2^2-\frac{1}{2\tau}\norm{\bm{\nu}^\star-\bm{\nu}^{(t+1)}}_2^2\\
 &+\frac{\tau}{2}\norm{((I_m-P)\otimes I_n)\bm{x}^{(t+1)}}_2^2
\end{aligned}
\label{theorem Bregman PDMM global convergence: eqn3}
\end{equation}
Based on \eqref{theorem Bregman PDMM global convergence: eqn3}, sum up \eqref{theorem Bregman PDMM global convergence: eqn1} and \eqref{theorem Bregman PDMM global convergence: eqn2} we obtain

\begin{equation}
\begin{aligned}
&V(t)-V(t+1)\\
\geq &R(t+1)+\frac{\mu}{2}\sum_{i, j\in\mathcal{V}}P_{ij}\norm{y_i^{(t+1)} - x_j^{(t+1)}}_p^2\\
 &-\frac{\tau/\rho+\gamma}{2}\norm{((I_m-P)\otimes I_n)\bm{x}^{(t+1)}}_2^2\\
  \overset{\eqref{lemma residual & variance: eqn1} }{\geq} &R(t+1)+\frac{\mu}{2}\sum_{i, j\in\mathcal{V}}P_{ij}\norm{y_i^{(t+1)} - x_j^{(t+1)}}_p^2\\
 &-\frac{\tau/\rho+\gamma}{2\sigma}\sum_{i, j\in\mathcal{V}}P_{ij}\norm{y_i^{(t+1)}-x_j^{(t+1)}}_p^2\\
 \overset{\eqref{parameter Bregman PDMM} }{\geq} & R(t+1).
 \end{aligned}
 \label{theorem Bregman PDMM global convergence: eqn4}
\end{equation}

Notice that properties of mirror maps ensured that \(x_i^{(t)}\) will not be achieved on the boundary of \(\mathcal{D}\) for all \(i\in\mathcal{V}\) and \(t\), hence \(V(t)<\infty\) for all \(t\). Sum up \eqref{theorem Bregman PDMM global convergence: eqn4} from \(t=0\) to \(\infty\) we have \(\sum_{t=0}^\infty R(t+1)\leq V(0)\). Since \(R(t+1)\geq 0\), \(R(t+1)\to 0\) as \(t\to \infty\), which completes the proof.
\end{proof}
\subsection{Theorem~\ref{theorem Bregman PDMM 1/T convergence} }
\begin{proof}
Since 
\begin{equation}
\begin{aligned}
& \langle -\bm{\nu}^{(t)}, ((I_m-P)\otimes I_n)\bm{x}^{(t+1)}\rangle\\
 \overset{\eqref{Bregman PDMM: dual update} }{=}  &\frac{1}{2\tau}\norm{\bm{\nu}^{(t)}}_2^2-\frac{1}{2\tau}\norm{\bm{\nu}^{(t+1)}}_2^2\\
 & +\frac{\tau}{2}\norm{((I_m-P)\otimes I_n)\bm{x}^{(t+1)}}_2^2
\end{aligned}
\label{theorem Bregman PDMM 1/T convergence: eqn1}
\end{equation}
Substitute \eqref{theorem Bregman PDMM 1/T convergence: eqn1} into \eqref{theorem Bregman PDMM global convergence: eqn1}, combined with the fact that 
\[\rho\sum_{i\in\mathcal{V}} B_\phi(x_i^{(t+1)}, y_i^{(t)})+\sum_{i\in\mathcal{V}}\delta_iB_{\varphi_i}(x_i^{(t+1)}, x_i^{(t)})\geq 0\] 
we obtain 
\begin{equation}
\begin{aligned}
&\sum_{i\in\mathcal{V}} f_i(x_i^{(t+1)})-\sum_{i\in\mathcal{V}} f_i(x^\star)\\
\overset{\eqref{lemma residual & variance: eqn1}}{\leq} & \frac{1}{2\tau}\norm{\bm{\nu}^{(t)}}_2^2-\frac{1}{2\tau}\norm{\bm{\nu}^{(t+1)}}_2^2\\
&+\rho\sum_{i\in\mathcal{V}} B_\phi(x^\star, y_i^{(t)})-\rho\sum_{i\in\mathcal{V}} B_\phi(x^\star, y_i^{(t+1)})\\
&+\sum_{i\in\mathcal{V}} \delta_iB_{\varphi_i}(x^\star, x_i^{(t)}) -\sum_{i\in\mathcal{V}} \delta_i B_{\varphi_i}(x^\star, x_i^{(t+1)})\\
&-\frac{\mu\rho}{2}\sum_{i, j\in\mathcal{V}}P_{ij}\norm{y_i^{(t+1)} - x_j^{(t+1)}}_p^2\\
&+\frac{\tau}{2\sigma}\sum_{i, j\in\mathcal{V}}P_{ij}\norm{y_i^{(t+1)}-x_j^{(t+1)}}_p^2\\
\overset{\eqref{parameter Bregman PDMM} }{\leq} & \frac{1}{2\tau}\norm{\bm{\nu}^{(t)}}_2^2-\frac{1}{2\tau}\norm{\bm{\nu}^{(t+1)}}_2^2\\
&+\rho\sum_{i\in\mathcal{V}} B_\phi(x^\star, y_i^{(t)})-\rho\sum_{i\in\mathcal{V}} B_\phi(x^\star, y_i^{(t+1)})\\
&+\sum_{i\in\mathcal{V}} \delta_iB_\varphi(x^\star, x_i^{(t)}) -\sum_{i\in\mathcal{V}} \delta_i B_{\varphi_i}(x^\star, x_i^{(t+1)})
\end{aligned}
\label{theorem Bregman PDMM 1/T convergence: eqn2}
\end{equation}
Sum up \eqref{theorem Bregman PDMM 1/T convergence: eqn2} for \(t=0, \ldots, T-1\) and apply Jensen's inequality we have \eqref{Bregman PDMM function value 1/T}, similarly sum up \eqref{theorem Bregman PDMM global convergence: eqn4} for  \(t=0, \ldots, T-1\) and apply Jensen's inequality we have \eqref{Bregman PDMM residual 1/T}.
\end{proof}
\subsection{Corollary~\ref{corollary 1}}
\begin{proof}
The proof is a direction application of Theorem~\ref{theorem Bregman PDMM 1/T convergence} and the fact that \(\bm{\nu}^\star\) is in the range space of matrix \((I_m-P)\otimes I_n\) if \(\bm{\nu}^{(0)}=0\) (See Lemma 1 in \cite{meng2015proximal}).
\end{proof}


%
%
%
%
%

\end{document}